\documentclass[12pt]{article}

\usepackage{amsfonts}
\usepackage{amssymb}
\usepackage{amsthm}
\usepackage{amsmath}
\usepackage{mathrsfs}
\usepackage{stmaryrd}
\usepackage[latin1]{inputenc}
\usepackage[all]{xy}
\usepackage[left=1.20in,right=1.20in]{geometry}

%body={6.0in, 7.5in},

\newcommand{\ent}{\mathbb Z}

\theoremstyle{plain}
\newtheorem{teo}{Theorem}

\newtheorem{coro}[teo]{Corollary}
\newtheorem{lema}[teo]{Lemma}

\theoremstyle{definition}
\newtheorem{defi}[teo]{Definition}
\newtheorem{nota}[teo]{Notation}

\theoremstyle{remark}

\newtheorem{rem}[teo]{Remark}

\title{Cyclic cellularity and active sums}

\author{Nadia Romero\footnote{Departamento de Matem\'aticas, Universidad de Guanajuato. Guanajuato, Mexico.}}

\date{ }

\begin{document}
\maketitle

\begin{abstract}
Let $G$ be a group and  let $\mathcal{F}$ be a family of subgroups of $G$ closed under conjugation. For a positive integer $n$, let $C_n$ denote a cyclic group of order $n$. We show that if there exists an integer $n$ such that every group in $\mathcal{F}$ is $C_n$-cellular  and has finite exponent diving $n$, then the active sum $S$ of $\mathcal{F}$ is $C_n$-cellular. We  obtain a couple of interesting consequences of this result, using results about cellularity. Finally, we give different proofs of the facts that Coxeter groups are $C_2$-cellular and that many groups of the form $\mathrm{SL}(n,\,q)$ for $n\geq3$ are $C_3$-cellular.%, and other examples.%and that every finite metacyclic $p$-group $P$ with $p$ odd is $C_{p^a}$-cellular if $C_{p^a}$ is the exponent .
\end{abstract}

\section*{Introduction}
%The aim of this note is to obtain some examples of $A$-cellular groups for $A$ a finite cyclic group, by using the fact that the group in question is the active sum of a family of cyclic subgroups. 

The \textit{group theoretical cellularization} of a group $G$ was developed by Rodr\'iguez and Scherer in \cite{rodsc} as an analogue in the category of groups of the cellularization of spaces. In recent years there have been important developments in the subject, as can be seen from \cite{flor2}, \cite{farj} and \cite{chac}, as well as from other references in the introduction of \cite{chac}. On the other hand, the notion of \textit{active sum} appeared in a paper of Tomás \cite{tomasana} as a generalization of the direct sum of groups, but this time taking into account the mutual actions of the groups in question. In its present form, the active sum of an \textit{active family} of subgroups of a group $G$ can be defined as a certain colimit in the category of groups (see Section 1.1. in \cite{sumasact} for details). Proving that a given group is the active sum of a  family of subgroups is not an easy task, but many examples have been considered in \cite{sumasact}, \cite{sumasact2} and \cite{ours}, dealing in particular with the question of  when a given group can be recovered as the active sum of a family of cyclic subgroups.

It was during a talk  about active sums at the EPFL, that J\'er\^ome Scherer observed that the active sum of a family of subgroups of $G$ seemed to share some nice properties with a \textit{cellular cover} of $G$ (compare for example Theorem 1 in \cite{chac} with the definition of active sum, or Lemma 1.5 in \cite{farj} with Lemma 1.5 in \cite{sumasact}). We will see that being the active sum of a family of cyclic subgroups is in general a stronger condition than being cellular for a cyclic group. In Theorem \ref{sumcel}, we prove cellularity with respect to a cyclic group for the active sum $S$ of a family of subgroups of $G$, subject to certain conditions. Using some results about cellularity we obtain two consequences of this, the first one is about the primes dividing the Schur multiplier of $S$ and the second one regards the question of when an $A$-cellular group, with $A$ cyclic, is (isomorphic to) the active sum of a family of cyclic subgroups. As a final consequence, we obtain a couple of examples of groups which are $A$-cellular for a cyclic group $A$.

\section{Definitions and notation}

%In its more general definition, the active sum of an \textit{active family} of subgroups of a group $G$ is  defined as a colimit (see Section 1 of \cite{sumasact} for details), but since in this note we  consider only families in which the order is given by inclusion, the definition can be given as in Section 2.1 of \cite{ours}, that is:

For the active sum, we take the definition given in Section 1.2 of \cite{sumasact}, but we consider only families with the order given by equality. In this setting, the definition can be given as in Section 2.1 of \cite{ours}, that is:

%The active sum can be described as a colimit... FISH, and has the following universal property...?

\begin{defi}
Let $\mathcal{F}$ be a family of distinct subgroups of $G$  closed under conjugation ($\forall F\in\mathcal{F}, g\in G: F^g=g^{-1}Fg\in \mathcal{F}$). The \textit{active sum} $S$ of $\mathcal{F}$ is the free product of the elements of $\mathcal{F}$ divided by the normal subgroup generated by the elements of the form $h^{-1}\cdot g\cdot h\cdot (g^h)^{-1}$, with $h\in F_1$, $g\in F_2$,  $F_1, F_2\in\mathcal{F}$ (and thus, $g^h\in F_2^h=h^{-1}F_2h\in\mathcal{F}$).
\end{defi}

We note that if the family is generating ($\langle\bigcup_{F\in\mathcal{F}}F\rangle=G$), there is a surjective homomorphism $\varphi: S\to G$; see Section 1.2 of \cite{sumasact}.

Observe that if $G$ is a finite group, then the active sum of any family of distinct subgroups of $G$, closed under conjugation, is finite too. 

\begin{nota}
The letters $G$, $X$ and $Y$ will denote groups. For a positive integer $n$, we will write: 
\begin{itemize}
\item[i)] $C_n$ for a multiplicative cyclic group of order $n$.
\item[ii)] $G_n$ for $\{x\in G\mid x^n=1\}$.
\item[iii)] $\pi(n)$  for the set of primes dividing $n$. If $G$ is a finite group, then $\pi(G)$ will stand for $\pi(|G|)$. 
\end{itemize}

The Schur multiplier of $G$, the group $H_2(G,\, \ent)$, will be denoted by $H_2(G)$.
\end{nota}

We will take as our definition of an $A$-cellular group the one given in Definition 2.2 of \cite{chac}.

\begin{defi}[Definitions 2.1 and 2.2 in \cite{chac}]
Let $A$ be a group. A group homomorphism $f:X\rightarrow Y$ is called an $A$\textit{-equivalence}  if the map
\begin{displaymath}
\mathrm{Hom}(A,\, f):\mathrm{Hom}(A,\, X)\rightarrow \mathrm{Hom}(A,\, Y)
\end{displaymath}
induced by composition with $f$ is a  bijection. The homomorphism $f$ is called an $A$\textit{-injection}, if $\mathrm{Hom}(A,\, f)$ is an injection, and it is called $A$\textit{-trivial} if the image of $\mathrm{Hom}(A,\, f)$ consists of only the trivial homomorphism $1_{A,\, Y}:A\rightarrow Y$, $a\mapsto 1$.

A group $G$ is called $A$\textit{-cellular} if every $A$-equivalence is also a $G$-equivalence; it is called $A$\textit{-generated} if every $A$-trivial homomorphism is also $G$-trivial, and it is called $A$\textit{-constructible} if for every group $T$, the condition $\mathrm{Hom}(A,\, T)=\{1_{A,\, T}\}$ implies $\mathrm{Hom}(G,\, T)=\{1_{G,\, T}\}$. 
\end{defi}

%Thus we take a (finite) group $G$, and a family  We obtain a canonical homomorphism

\section{$C_n$-cellularity}

\begin{lema}\label{inje}
Let $m$ and $n$ be positive integers such that $m$ divides $n$.
\begin{itemize}
\item[a)] Every $C_n$-equivalence $f: X\rightarrow Y$ induces a bijection between $X_m$ and~$Y_m$.
\item[b)] Every $C_m$-cellular group is also $C_n$-cellular.
\end{itemize}
\end{lema}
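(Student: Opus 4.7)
The approach hinges on the standard identification $\mathrm{Hom}(C_k,X)\leftrightarrow X_k$ that sends a homomorphism to the image of a fixed generator of $C_k$ (an element $x\in X$ arising this way since $x^k=1$). Under this identification, the map $\mathrm{Hom}(C_k,f)$ becomes the restriction $f|_{X_k}\colon X_k\to Y_k$. Thus a homomorphism $f\colon X\to Y$ is a $C_k$-equivalence if and only if it restricts to a bijection $X_k\to Y_k$, and it is a $C_k$-injection iff this restriction is injective. I will use both halves of this reformulation.

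For part (a), the hypothesis translates to a bijection $f|_{X_n}\colon X_n\to Y_n$. Since $m\mid n$, we have $X_m\subseteq X_n$ and $Y_m\subseteq Y_n$, and $f$ clearly maps $X_m$ into $Y_m$; the injectivity of $f|_{X_m}$ is inherited from $f|_{X_n}$. The only point that requires an argument is surjectivity: given $y\in Y_m\subseteq Y_n$, there is a unique $x\in X_n$ with $f(x)=y$, but a priori we only know $x^n=1$, not $x^m=1$. The small trick is to consider the element $x^m\in X_n$ (it satisfies $(x^m)^n=1$) and observe that $f(x^m)=y^m=1$. Interpreting $x^m$ and $1$ as homomorphisms $C_n\to X$ via the identification above, they have the same image under $\mathrm{Hom}(C_n,f)$, hence are equal by the hypothesis; so $x^m=1$ and $x\in X_m$.

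Part (b) is then a formal consequence of (a). Let $G$ be $C_m$-cellular and let $f\colon X\to Y$ be any $C_n$-equivalence. By part (a), $f$ restricts to a bijection $X_m\to Y_m$, so $\mathrm{Hom}(C_m,f)$ is a bijection and $f$ is a $C_m$-equivalence. Since $G$ is $C_m$-cellular, $f$ is a $G$-equivalence. Thus every $C_n$-equivalence is a $G$-equivalence, i.e.\ $G$ is $C_n$-cellular.

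The only non-routine step is the surjectivity argument in (a), where the injectivity of $\mathrm{Hom}(C_n,f)$ has to be used a second time on the auxiliary element $x^m$ to promote the lift from $X_n$ into $X_m$. Once that is in place, the deduction of (b) is immediate.
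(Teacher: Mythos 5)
Your proof is correct and follows essentially the same route as the paper's: both reduce $C_k$-equivalences to the restriction $f|_{X_k}\colon X_k\to Y_k$ (you just make the identification $\mathrm{Hom}(C_k,X)\cong X_k$ explicit up front, while the paper writes out the homomorphisms), and both use the injectivity of $f$ on $X_n$ a second time on the auxiliary element $x^m$ to conclude $x\in X_m$. Part (b) is likewise obtained in both cases by showing that any $C_n$-equivalence is a $C_m$-equivalence via part (a).
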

\begin{proof}
a) Suppose that $f:X\rightarrow Y$ is a $C_n$-equivalence. Let $x$ and $x'$ be two elements of $X_m$ such that $f(x)=f(x')$. If $C_n$ is generated by $g$, then there are homomorphisms $h_1$ and $h_2$ from $C_n$ to $X$ satisfying $h_1(g)=x$ and $h_2(g)= x'$. The previous equality implies $fh_1=fh_2$, which implies $h_1=h_2$ and so $x=x'$. Now, given an element $y\in Y_m$ we can define a homomorphism $t:C_n\rightarrow Y$ which sends $g$ to $y$. But then there exists a homomorphism $h:C_n\rightarrow X$ such that $t=fh$. By taking $x_0=h(g)$, we have that $f(x_0)=y$. Finally, $y^m=1$ implies $f(x_0^m)=f(1)$, but clearly $x_0^m$ is in $X_n$, since $h(g)$ is, and $f$ is injective on this set, so we must have $x_0^m=1$.

b) Suppose that $G$ is a $C_m$-cellular group and let $f:X\rightarrow Y$ be a $C_n$-equivalence. We will show that $f$ is a $C_m$-equivalence to obtain the result.

Let $t_1$ and $t_2$ be two homomorphisms from $C_m$ to $X$ and suppose $ft_1=ft_2$. Since the images of $t_1$ and $t_2$ are contained in $X_m$, and by a) $f$ is injective on this set, we have that $t_1=t_2$. Now let $h$ be a homomorphism from $C_m$ to $Y$ and suppose $C_m$ is generated by~$g$. Since the image of $h$ is contained in $Y_m$, there exists and element $x\in X_m$ such that $f(x)=h(g)$. But then we can define $t':C_m\rightarrow X$ by sending $g$ to $x$ and we have that $ft'=h$.

\end{proof}

\begin{teo}
\label{sumcel}
Suppose $\mathcal{F}$ is a family consisting of distinct subgroups of $G$ of finite exponent and 
closed under conjugation. If there exists a positive integer $n$ such that for every $F\in \mathcal{F}$ the exponent of $F$ divides $n$ and $F$ is $C_n$-cellular, then the active sum $S$ of the family $\mathcal{F}$ is $C_n$-cellular.
\end{teo}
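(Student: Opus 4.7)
The plan is to verify directly the defining property of $C_n$-cellularity: for every $C_n$-equivalence $f:X\to Y$, the induced map $\mathrm{Hom}(S,f):\mathrm{Hom}(S,X)\to\mathrm{Hom}(S,Y)$ should be a bijection. By the universal property of the active sum as a quotient of the free product of the $F\in\mathcal{F}$ by the relations $h^{-1}gh(g^h)^{-1}=1$, a homomorphism $S\to X$ is the same as a family $(\phi_F:F\to X)_{F\in\mathcal{F}}$ satisfying
\[
\phi_{F_1}(h)^{-1}\,\phi_{F_2}(g)\,\phi_{F_1}(h)=\phi_{F_2^h}(g^h)
\]
for all $F_1,F_2\in\mathcal{F}$, $h\in F_1$ and $g\in F_2$. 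Since each $F\in\mathcal{F}$ is $C_n$-cellular and $f$ is a $C_n$-equivalence, $\mathrm{Hom}(F,f)$ is a bijection for every $F\in\mathcal{F}$.

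For the injectivity of $\mathrm{Hom}(S,f)$, suppose $\phi_1,\phi_2:S\to X$ satisfy $f\phi_1=f\phi_2$. For each $F$ we have $f\circ(\phi_1|_F)=f\circ(\phi_2|_F)$, so injectivity of $\mathrm{Hom}(F,f)$ gives $\phi_1|_F=\phi_2|_F$. Since the images of the groups in $\mathcal{F}$ generate $S$, we conclude $\phi_1=\phi_2$.

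For the surjectivity, let $\psi:S\to Y$ be given. For each $F\in\mathcal{F}$, surjectivity of $\mathrm{Hom}(F,f)$ produces a lift $\phi_F:F\to X$ with $f\phi_F=\psi|_F$. The main obstacle is to check that the collection $(\phi_F)_{F\in\mathcal{F}}$ is compatible with the conjugation relations displayed above. Fix $h\in F_1$, $g\in F_2$, and set $a=\phi_{F_1}(h)^{-1}\phi_{F_2}(g)\phi_{F_1}(h)$ and $b=\phi_{F_2^h}(g^h)$. Applying $f$ to both and using that $\psi$ does satisfy the relations in $S$ gives $f(a)=f(b)$. The exponent hypothesis now enters decisively: since the exponent of $F_2^h$ divides $n$ we have $b\in X_n$, and since the exponent of $F_2$ divides $n$ we have $\phi_{F_2}(g)\in X_n$, whence $a\in X_n$ too, because conjugation preserves the order of an element. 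By Lemma \ref{inje} a) applied with $m=n$, $f$ restricts to an injection $X_n\to Y_n$, so $a=b$. Thus the $\phi_F$ assemble into a well-defined homomorphism $\phi:S\to X$ with $f\phi=\psi$.

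In summary, the injectivity is formal from $C_n$-cellularity of each $F$, while the real content lies in the surjectivity step: the assumption that every $F\in\mathcal{F}$ has exponent dividing $n$ is exactly what pushes the verification of the active sum relations into the $n$-torsion of $X$, where Lemma \ref{inje} a) forces the equality on the nose.
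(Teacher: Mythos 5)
Your proof is correct and is essentially the paper's own argument: both work with the presentation of $S$ as the quotient of the free product $\amalg_{F\in\mathcal{F}}F$ by the conjugation relators, and in both the decisive step is identical --- the exponent hypothesis places both sides of each relator equation in $X_n$, where Lemma \ref{inje}~a) forces equality on the nose. The only minor difference is that the paper produces the lift in one stroke by invoking Proposition 7.1 of \cite{chac} (the free product of the $C_n$-cellular groups $F$ is $C_n$-cellular) and then factors it through $S$, whereas you lift one $\phi_F$ per $F$ and reassemble via the universal property, in effect reproving the needed special case of that proposition; note also that your ``$\psi|_F$'' should be read as $\psi$ composed with the canonical map $F\to S$, which need not be injective, though nothing in your argument relies on injectivity.
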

\begin{proof}
%We have to show that the map $Hom(S,\, X)\rightarrow Hom(S,\, Y)$, induced by composition with $f$, is a bijection.
As explained in Section 1, the active sum in this case is the quotient of the free product $\amalg_{F\in \mathcal{F}} F$ by the normal subgroup $\mathcal{R}$ generated by elements of the form $r_1^{-1}\cdot r_2\cdot r_1\cdot (r_2^{r_1})^{-1}$, where $r_i\in F_i\in \mathcal{F}$ and $r_2^{r_1}$ denotes the conjugation in $G$. We have an epimorphism $\tau : \amalg_{F\in \mathcal{F}} F\rightarrow S$.

Let $f: X\rightarrow Y$ be a $C_n$-equivalence and $h\in \mathrm{Hom}(S,\, Y)$. Composition with $\tau$ gives a homomorphism $h\tau:\amalg_{F\in \mathcal{F}} F\rightarrow Y$. By Proposition 7.1 in \cite{chac}, the group $\amalg_{F\in \mathcal{F}} F$ is $C_n$-cellular. Hence there exists a homomorphism $t':\amalg_{F\in \mathcal{F}} F\rightarrow X$ such that $ft'=h\tau$. This implies $ft'(r_1^{-1}\cdot r_2\cdot r_1\cdot (r_2^{r_1})^{-1})=1$, that is $ft'(r_1^{-1}\cdot r_2\cdot r_1)=ft'(r_2^{r_1})$. Now, $t'(r_1^{-1}\cdot r_2\cdot r_1)$ and $t'(r_2^{r_1})$ are both in $X_n$, so by the injectivity of $f$ on this set, we have $t'(r_1^{-1}\cdot r_2\cdot r_1\cdot (r_2^{r_1})^{-1})=1$. This means that $t'$ can be extended to $t:S\rightarrow X$ and we have $ft'=ft\tau=h\tau$. But $\tau$ is a surjective homomorphism so $ft=h$.

Now suppose $t_1$ and $t_2$ are two homomorphisms from $S$ to $X$ such that $ft_1=ft_2$. Clearly, this gives $ft_1\tau=ft_2\tau$. Since $\amalg_{F\in \mathcal{F}} F$ is $C_n$-cellular this implies $t_1\tau=t_2\tau$, and we have $t_1=t_2$.
\end{proof}

%\begin{proof}
%Part a) is clear from the previous proposition and Lemma \ref{inje}. Part b) is obtained from Lemma \ref{inje} and the proposition,
%\end{proof}

%For the following two results, we suppose that $G$ is a finite group.

\begin{coro}
Suppose $G$ is a finite group. Let $n$ be a positive integer and $\mathcal{F}$ be a family of subgroups of $G$ satisfying the hypotheses in Theorem \ref{sumcel}. If $S$ is the active sum of $\mathcal{F}$, then $\pi(H_2(S))\subseteq \pi(n)$.
\end{coro}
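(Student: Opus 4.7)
The plan has a very short skeleton: the corollary is essentially an application of Theorem \ref{sumcel} followed by invocation of a known result from the cellularity literature.

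First I would observe that the hypotheses are exactly those of Theorem \ref{sumcel}, so that theorem immediately gives that the active sum $S$ is $C_n$-cellular. Next I would record that, because $G$ is finite, the active sum $S$ is finite as well (this was pointed out in the paragraph following the definition of active sum in Section 1). Thus $H_2(S)$ is a finite abelian group and the set $\pi(H_2(S))$ makes sense as a finite set of primes.

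The rest of the proof is then a direct citation. One needs a statement from the cellularity literature (the reference \cite{chac}, or its relatives \cite{flor2} and \cite{farj}) to the effect that if a finite group $H$ is $C_n$-cellular, then $\pi(H_2(H)) \subseteq \pi(n)$. Applied to $H=S$ this yields the claim. The intuition making such a result plausible is that the Schur multiplier of the cellularizing group $C_n$ is itself trivial (any cyclic group has $H_2=0$), so that building $S$ cellularly from $C_n$-cells cannot introduce new primes in $H_2$ beyond those already controlled by the exponent $n$.

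The only substantive obstacle is locating and citing precisely the right theorem on cellular groups; once that is in hand, the corollary is a one-line consequence of Theorem \ref{sumcel}. I would not attempt to reprove the homological statement from scratch here, since the purpose of the corollary, as advertised in the introduction, is exactly to showcase that Theorem \ref{sumcel} combines cleanly with the existing machinery on $C_n$-cellularity.
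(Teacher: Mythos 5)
Your proposal is correct and takes essentially the same route as the paper: apply Theorem \ref{sumcel} to conclude that $S$ is $C_n$-cellular, note that $S$ is finite because $G$ is, and then quote the cellularity literature for the statement that a finite $C_n$-cellular group has Schur multiplier with primes in $\pi(n)$. The precise citation you left open is supplied in the paper by combining two results of \cite{chac}: Corollary 4 (if $S$ is $C_n$-cellular then $H_2(S)$ is $C_n$-constructible) and Proposition 4.3.1 (from which one deduces that for finite nilpotent groups $A$ and $K$, $K$ is $A$-constructible if and only if $\pi(K)\subseteq\pi(A)$), applied with $A=C_n$ and $K=H_2(S)$ --- so the mechanism is constructibility of the multiplier rather than the triviality of $H_2(C_n)$ that your heuristic suggests, but since you flagged that only as intuition it does not affect the argument.
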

\begin{proof}
By the previous theorem, $S$ is $C_n$-cellular. This implies, by Corollary 4 in \cite{chac}, that $H_2(S)$ is $C_n$-constructible. But, using Proposition 4.3.1 of the same reference, one can show that if $A$ and $K$ are finite nilpotent groups, then $K$ is $A$-constructible if and only if $\pi(K)\subseteq \pi(A)$. This gives us the result.
\end{proof}

\begin{coro}
Suppose $G$ is a finite group. Let $\mathcal{F}$ be a generating family of subgroups of $G$ satisfying the hypotheses in Theorem \ref{sumcel}. Let $\varphi: S\rightarrow G$ be the canonical  surjective homomorphism from the active sum $S$ of $\mathcal{F}$ to $G$. If $\pi (H_2(G))\subseteq \pi (n)$  and $\varphi$ is a $C_n$-injection, then $\varphi$ is an isomorphism from $S$ onto $G$.
\end{coro}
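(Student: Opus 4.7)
The plan is to prove the corollary by showing $K:=\ker\varphi=\{1\}$; since $\varphi$ is already surjective by hypothesis, this will suffice. First, the $C_n$-injection hypothesis on $\varphi$, applied exactly as in the proof of part a) of Lemma \ref{inje} (that argument only uses $C_n$-injectivity to establish the injection between the sets of elements of order dividing $n$), yields that the restriction $\varphi|_{S_n}:S_n\to G_n$ is injective. In particular $K\cap S_n=\{1\}$.

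The crucial intermediate step, and the main obstacle in the plan, is to establish that $K$ is central in $S$. Given $k\in K$ and $c\in S_n$, conjugation preserves orders, so $k^{-1}ck\in S_n$, and $\varphi(k^{-1}ck)=\varphi(c)$. Injectivity of $\varphi|_{S_n}$ then forces $k^{-1}ck=c$, so $k$ commutes with $c$. Since each $F\in\mathcal{F}$ has exponent dividing $n$, the canonical image of $F$ in $S$ is contained in $S_n$, and these images together generate $S$. Hence $k$ commutes with every element of $S$, giving $K\subseteq Z(S)$.

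With $1\to K\to S\to G\to 1$ now a central extension, I would invoke the Stallings-Stammbach five-term exact sequence
\[
H_2(S)\to H_2(G)\to K\to S^{ab}\to G^{ab}\to 0.
\]
By hypothesis $\pi(H_2(G))\subseteq\pi(n)$; moreover $S^{ab}$ is abelian, finite, and generated by elements of order dividing $n$, so it has exponent dividing $n$ and hence $\pi(S^{ab})\subseteq\pi(n)$. The exact sequence then presents $K$ as an extension of a subgroup of $S^{ab}$ by a quotient of $H_2(G)$, so $\pi(K)\subseteq\pi(n)$. If $K$ were nontrivial it would contain an element of some prime order $p\in\pi(K)\subseteq\pi(n)$; that element would then belong to $S_n$ and hence to $K\cap S_n=\{1\}$, a contradiction. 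Therefore $K=\{1\}$ and $\varphi$ is an isomorphism. The delicate part of this strategy is the centrality step; once it is in place, the rest follows from the standard five-term sequence together with the arithmetic constraints on $S^{ab}$ coming from the generation of $S$ by elements of order dividing $n$ and on $H_2(G)$ coming from the hypothesis.
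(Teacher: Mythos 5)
Your proof is correct, but it follows a genuinely different route from the paper. The paper's proof is a three-line citation chain: Theorem \ref{sumcel} gives that $S$ is $C_n$-cellular, hence $C_n$-generated by Proposition 2.3 in \cite{chac}, and then Corollary 5.4.3 in \cite{chac} (applied to the surjection $\varphi$, which is a $C_n$-injection, together with $\pi(H_2(G))\subseteq\pi(n)$) yields the isomorphism. You instead give a self-contained argument that essentially reproves the relevant special case of that cited corollary: the $C_n$-injection hypothesis gives injectivity of $\varphi$ on $S_n$, hence $K\cap S_n=\{1\}$ and, since $S$ is generated by the images of the $F\in\mathcal{F}$, which lie in $S_n$, the kernel $K$ is central; the five-term exact sequence for the central extension then bounds the primes of $K$ by $\pi(H_2(G))\cup\pi(S^{\mathrm{ab}})\subseteq\pi(n)$, and a nontrivial $K$ would produce an element of prime order $p\mid n$ inside $K\cap S_n$, a contradiction. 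All steps check out (note that you do not even need finiteness of $S^{\mathrm{ab}}$ for the exponent claim --- an abelian group generated by elements of order dividing $n$ has exponent dividing $n$ --- and torsion-ness of $K$, needed to extract an element of prime order, follows either from the extension structure or from the paper's observation that $S$ is finite when $G$ is). What the comparison buys: the paper's route is shorter and places the statement squarely within the cellularity framework of \cite{chac}, where the point is precisely that $S$ is $C_n$-generated; your route is elementary and transparent, and it reveals that for this corollary the $C_n$-cellularity of the members of $\mathcal{F}$ is not actually needed --- only that their exponents divide $n$, which is what makes $S$ generated by $S_n$ --- at the cost of redoing by hand the homological argument that \cite{chac} packages in Corollary 5.4.3.
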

\begin{proof}
By the previous theorem, $S$ is $C_n$-cellular. Then, $S$ is $C_n$-generated, by Proposition 2.3 in \cite{chac}. The result follows now from  Corollary 5.4.3 in \cite{chac}. 

\end{proof}

\section{Examples}

As a consequence of Theorem \ref{sumcel}, we have the following two examples.

\begin{itemize}
\item Every Coxeter group is $C_2$-cellular.

By Example 2.2.4 in \cite{sumasact}, every Coxeter group is the active sum of a family of subgroups of order 2.		
		
\item Let $n\geq 3$. The group $\mathrm{SL}(n,\, q)$ is $C_3$-cellular if it is not one of the following: $\mathrm{SL}(3,\, 2)$, $\mathrm{SL}(3,\, 3)$, $\mathrm{SL}(4,\, 2)$ and $\mathrm{SL}(3,\, 4)$.

By Theorem 3.5 in \cite{sumasact}, each of these groups is the active sum of a family of subgroups of order 3.
\end{itemize}

\begin{rem}
These examples can also be obtained using Corollary 4 and Proposition 4.3.1 in \cite{chac}.
\end{rem}

\section*{Acknowledgements}
Thanks to J\'er\^ome Scherer for the fruitful conversations.%, and to the referees for their suggestions.
\begin{flushleft}
E-mail: \texttt{nadia.romero$\, $@$\, $ugto.mx}
\end{flushleft}
%\bibliography{cn-cell.bib}{}

\begin{thebibliography}{1}

\bibitem{chac}
W.~Chach\'olski, E.~Damian, E.~D. Farjoun, and Y.~Segev.
\newblock The ${A}$-core and ${A}$-cover of a group.
\newblock {\em J. Algebra}, 321:631--666, 2009.


\bibitem{sumasact}
A.~D\'{\i}az-Barriga, F.~Gonz\'alez-Acu{\~n}a, F.~Marmolejo, and L.~Rom\'an.
\newblock Active sums \mbox{$\mathrm{I}$}.
\newblock {\em Rev.\ Mat.\ Complut.}, 17(2):287--319, 2004.

\bibitem{sumasact2}
A.~D\'{\i}az-Barriga, F.~Gonz\'alez-Acu{\~n}a, F.~Marmolejo, and L.~Rom\'an.
\newblock Active sums \mbox{$\mathrm{II}$}.
\newblock {\em Osaka J. Math.}, 43:371--399, 2006.

\bibitem{ours}
A.~D\'iaz-Barriga, F.~Gonz\'alez-Acu{\~n}a, F.~Marmolejo, and N.~Romero.
\newblock Finite metacyclic groups as active sums of cyclic subgroups.
\newblock {\em C. R. Acad. Sci. Paris}, 352:567--51, 2014.


\bibitem{farj}
E.~D. Farjoun, R.~G$\ddot{\textrm{o}}$bel, and Y.~Segev.
\newblock Cellular covers of groups.
\newblock {\em J. Pure Appl. Algebra}, 208:61--76, 2007.

\bibitem{flor2}
R.~Flores and J.~Scherer.
\newblock Cellularization of classifying spaces and fusion properties of finite
  groups.
\newblock {\em J. London Math. Soc.}, 76:41--56, 2007.

\bibitem{rodsc}
J.~L. Rodr\'iguez and J.~Scherer.
\newblock Cellular approximations using {M}oore spaces.
\newblock {\em Cohomological methods in homotopy theory (Bellaterra)}, pages
  354--374, 1998.

\bibitem{tomasana}
F.~Tom\'as.
\newblock Un an\'alogo de suma directa para sistemas de subgrupos normales.
\newblock {\em Anales del Instituto de Matem\'aticas UNAM}, 13:161--186, 1973.

\end{thebibliography}
%\bibliographystyle{plain}

\end{document}